\setlist[enumerate,1]{label=\textup{(\arabic*)}}
\setlist[enumerate,2]{label=\textup{(\alph*)}}
\renewcommand{\PrintDOI}[1]{\href{http://dx.doi.org/\detokenize{#1}}{doi: \detokenize{#1}}}
\theoremstyle{plain}
\newtheorem{theorem}[subsection]{Theorem}
\newtheorem{lemma}[subsection]{Lemma}
\newtheorem{proposition}[subsection]{Proposition}
\theoremstyle{definition}
\newtheorem{definition}[subsection]{Definition}
\theoremstyle{remark}
\newtheorem{remark}[subsection]{Remark}
\newcommand*{\defeq}{\mathrel{\vcentcolon=}}
\newcommand{\C}{\mathbb{C}}
\newcommand{\N}{\mathbb{N}}
\newcommand{\Z}{\mathbb{Z}}
\newcommand{\R}{\mathbb{R}}
\newcommand{\T}{\mathbb{T}}
\newcommand{\Sphere}{\mathbb{S}}
\newcommand{\field}{\mathbb{F}}
\newcommand*{\nb}{\nobreakdash}
\newcommand*{\Cst}{\mathrm C^*}
\newcommand{\Mat}{\mathbb M}
\newcommand*{\conj}[1]{\overline{#1}}
\newcommand*{\Id}{\mathrm{id}}
\newcommand*{\K}{\mathrm{K}}
\DeclareMathOperator{\rank}{rank}
\begin{document}
\title[Nonstable ``real'' and ``quaternionic'' K-theory]{Isomorphism and stable isomorphism in ``real'' and ``quaternionic'' K-theory}
\author{Malkhaz Bakuradze}
\email{malkhaz.bakuradze@tsu.ge}
\address{Faculty of Exact and Natural Sciences, Tbilisi State
  University\\
  Tbilisi\\
  Georgia}

\author{Ralf Meyer}
\email{rmeyer2@uni-goettingen.de}
\address{Mathematisches Institut\\
  Universit\"at G\"ottingen\\Bunsenstra\ss e 3--5\\
  37073 G\"ottingen\\Germany}

\keywords{topological insulator;
  ``real'' K-theory;
  ``quaternionic'' K-theory;
  stable isomorphism}

\subjclass{19L99 (primary); 19L47, 55S35 (secondary)}
\thanks{This work was supported by the Shota Rustaveli National Science Foundation of Georgia (SRNSFG) grant FR-23-779.}

\begin{abstract}
  We find lower bounds on the rank of a ``real'' vector bundle over an
  involutive space, such that ``real'' vector bundles of higher rank
  have a trivial summand and such that a stable isomorphism for such
  bundles implies ordinary isomorphism.  We prove similar lower bounds
  also for ``quaternionic'' bundles.  These estimates have
  consequences for the classification of topological insulators with
  time-reversal symmetry.
\end{abstract}
\maketitle

\section{Introduction}
\label{sec:intro}

Topological insulators are materials that are insulators such that
some special topology enforces the existence of conducting states on
their boundaries.  These conducting boundary states tend to be very
robust under disorder.  See, for instance,
\cite{Schulz-Baldes:Insulators} for a survey on this subject from
the perspective of non-commutative geometry and index theory.  It is
common to model such materials in the one-particle approximation.
The physical system is then described through a
\(\Cst\)\nb-algebra~\(A\) of observables and an invertible,
self-adjoint element \(H\in A\), the Hamiltonian.  Two such systems
with the same \(\Cst\)\nb-algebra~\(A\) are considered in the same
topological phase if there is a homotopy of invertible self-adjoint
elements in~\(A\) between their Hamiltonians.  Up to homotopy, we
may replace self-adjoint invertible elements by self-adjoint
unitaries.  By a linear transformation, these may be replaced by
projections in~\(A\).  So the set of possible topological phases of
the system described by~\(A\) is the set of
homotopy classes of projections in~\(A\).

A projection in~\(A\) has a \(\K\)\nb-theory class in \(\K_0(A)\).
However, if two projections have the same class in \(\K_0(A)\), then
they are only stably homotopic, that is, they become homotopic after
adding the same projection to them both.  Since \(\K\)\nb-theory is
easier to compute than sets of homotopy classes of projections, it is
important to know situations where stable homotopy implies homotopy.

If disorder in the system is neglegcted, then the observable algebra
becomes isomorphic to a matrix algebra over the algebra of continuous
functions on the \(d\)\nb-torus~\(\T^d\), where~\(d\) is the dimension
of the material.  Many interesting topological materials only exhibit
a nontrivial topological phase when we require extra symmetries that
are anti-unitary or that anticommute with the Hamiltonian.  A
particularly important case is a time-reversal symmetry.  This is
actually two cases because the symmetry may have square \(+1\)
or~\(-1\).  It is explained in \cites{De_Nittis-Gomi:Real_Bloch,
  De_Nittis-Gomi:Quaternionic_Bloch} how topological phases in these
two symmetry types are classified using ``real'' and ``quaternionic''
vector bundles over tori with involution.  We briefly sketch this here.

The torus~\(\T^d\) above appears through the Fourier transform.  The
relevant observable algebra is the group
\(\Cst\)\nb-algebra of~\(\Z^d\), tensored by a matrix algebra.  Here
the time-reversal symmetry acts by entrywise complex conjugation
combined with conjugation by a suitable scalar matrix~\(\Theta\).  Under
Fourier transform, this becomes the real \(\Cst\)\nb-algebra
\[
  \bigl\{f\colon \T^d \to \Mat_n(\C) \bigm|
  f(\conj{z}) = \Theta \conj{f(z)} \Theta^{-1} \text{ for all }z\in\T^d
  \bigr\}.
\]
The conjugation map \(z\mapsto \conj{z}\) on~\(\T^d\) is an involution
and generates an action of the group~\(\Z/2\).  Projections in matrix
algebras over the real group \(\Cst\)\nb-algebra of~\(\Z^d\)
correspond by the Serre--Swan Theorem to ``real'' vector bundles
over~\(\T^d\) as defined by Atiyah~\cite{Atiyah:K_Reality}.  That is,
they carry a map on their total space that lifts the involution on the
base space, is fibrewise conjugate-linear, and squares to the identity map.  For
a time reversal symmetry with square~\(-1\), we instead need the map
on the total space to square to the map of multiplication by~\(-1\),
and complex bundles with this kind of extra structure are called
``quaternionic''.

Thus it is physically interesting to know sufficient criteria for
two ``real'' or ``quaternionic'' vector bundles over~\(\T^d\) that
are stably isomorphic to be isomorphic.  Without the extra ``real''
or ``quaternionic'' structure map, such criteria are well known: for
each~\(d\) there is an explicit \(k(d)\in\N\) such that two complex
vector bundles of rank at least~\(k(d)\) over a space of covering
dimension~\(d\) are isomorphic once they are stably isomorphic (see
\cite{Husemoller:Fibre_bundles_3}*{Theorem~1.5 in Chapter~9}).  Similar
results are known for real and quaternionic vector bundles, but the
more general cases of ``real'' and ``quaternionic'' vector bundles
have not yet been treated.  This will be done here.

A related result says that any ``real'' vector bundle of
sufficiently high rank is a direct sum of a trivial ``real'' vector
bundle of rank~\(1\) and another ``real'' vector bundle.  In fact, a
relative version of this result, saying that a trivial direct
summand on a subspace may be extended to one on the whole space,
implies that stable isomorphism and isomorphism are equivalent for
bundles of sufficiently high rank.  However, to prove that stable
isomorphism and isomorphism are equivalent for certain bundles over
a space~\(X\), we need the statement for the
space \(X\times [0,1]\), relative to the subspace \(X\times \{0,1\}\).

Some results about trivial direct summands in vector bundles of
sufficiently high rank are already proven in
\cite{De_Nittis-Gomi:Real_Bloch}*{Theorem~4.25}
for ``real'' vector bundles and
\cite{De_Nittis-Gomi:Quaternionic_Bloch}*{Theorem~2.5}
for ``quaternionic'' vector bundles.  However, these statements assume
that the set of fixed points of the ``real'' involution on the
underlying space is discrete, so that they never apply to a space of
the form \(X\times [0,1]\).  Thus they cannot help to relate stable
isomorphism to isomorphism.  Therefore, our main task is to remove
this restriction from the results in \cites{De_Nittis-Gomi:Real_Bloch,
  De_Nittis-Gomi:Quaternionic_Bloch}.

As in \cites{De_Nittis-Gomi:Real_Bloch,
  De_Nittis-Gomi:Quaternionic_Bloch}, we work with
\(\Z/2\)-CW-complexes.  The key proof technique is induction over
cells, extending certain equivariant maps already defined on the
boundary of a $\Z/2$-cell to the interior.  Any smooth manifold with a
smooth \(\Z/2\)\nb-action carries an equivariant triangulation
by~\cite{Illman:Equivariant_triangulations}, and this implies
immediately that it may be turned into a \(\Z/2\)-CW-complex.  We now
formulate our main results.

\begin{theorem}
  \label{the:main_real}
  Let \(d_1,d_0,k\in\N\).  Let~\(X\) be a \(\Z/2\)-CW-complex.  Assume
  that the free cells in~\((X,A)\) have at most dimension~\(d_1\) and
  that the trivial cells have at most dimension~\(d_0\).  Let
  \[
    k_0 \defeq
    \max \left\{ d_0,
      \left\lceil \frac{d_1-1}{2} \right\rceil \right\},
    \qquad
    k_1 \defeq
    \max \left\{ d_0+1,
       \left\lceil \frac{d_1}{2} \right\rceil \right\}.
  \]
  \begin{enumerate}
  \item \label{the:main_real_1}%
    Let~\(E\) be a ``real'' vector bundle over~\(X\) of rank
    \(k\ge k_0\).  There is an isomorphism
    \(E\cong E_0 \oplus (X\times \C^{k-k_0})\) for some ``real''
    vector bundle~\(E_0\) over~$X$ and the trivial ``real'' vector
    bundle $X\times \C^{k-k_0}$ of rank~$k-k_0$.
  \item \label{the:main_real_2}%
    Let \(E_1\) and~\(E_2\) be two ``real'' vector bundles over~\(X\)
    of rank \(k\ge k_1\).  If \(E_1\) and~\(E_2\) are stably
    isomorphic, that is, $E_1 \oplus E_3 \cong E_2 \oplus E_3$ for
    some ``real'' vector bundle~$E_3$, then they are isomorphic.
  \end{enumerate}
\end{theorem}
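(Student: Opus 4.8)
The plan is to reduce both parts to the existence, respectively the equivariant uniqueness up to homotopy, of nowhere-vanishing ``real'' sections, which we control by equivariant obstruction theory run by induction over the cells of~\((X,A)\). Throughout we fix a ``real'' Hermitian metric, which exists because an arbitrary Hermitian metric may be averaged into one satisfying \(\langle \sigma u,\sigma v\rangle = \conj{\langle u,v\rangle}\), where~\(\sigma\) denotes the conjugate-linear structure map. The decisive structural point is that the two cell types behave differently: over a trivial cell the involution on the base is the identity, so~\(\sigma\) restricts to a conjugate-linear involution on each fibre and a ``real'' section must land in its fixed real subspace, so the relevant fibre is the \emph{real} sphere \(S^{r-1}\) of a rank-\(r\) bundle; over a free cell \(\Z/2\times D^n\) the two halves are disjoint and interchanged by~\(\sigma\), so a section is freely prescribed on one half and determined on the other by conjugate-linearity, and the relevant fibre is the \emph{complex} sphere \(S^{2r-1}\). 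For the first part I would split off trivial ``real'' lines one at a time: a nowhere-vanishing ``real'' section of a rank-\(r\) bundle spans a trivial ``real'' subbundle \(X\times\C\) whose metric complement is again a ``real'' bundle of rank \(r-1\). The obstruction to extending such a section over an \(n\)-cell lies in \(\pi_{n-1}\) of the appropriate fibre sphere; since \(S^{r-1}\) is \((r-2)\)-connected and \(S^{2r-1}\) is \((2r-2)\)-connected, these groups vanish over all cells of dimension at most~\(d_0\) (trivial) respectively~\(d_1\) (free) as soon as \(r-1\ge d_0\) and \(2r-1\ge d_1\). The smallest rank at which we still split is \(r=k_0+1\), where these read exactly \(k_0\ge d_0\) and \(k_0\ge\lceil (d_1-1)/2\rceil\), i.e.\ the definition of~\(k_0\); iterating from rank~\(k\) down to rank~\(k_0\) yields the summand \(X\times\C^{k-k_0}\).

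For the second part I would first reduce to cancelling a single trivial line. Embedding~\(E_3\) as a ``real'' subbundle of a trivial bundle (finitely many local trivializations, averaged to be equivariant) produces a ``real'' complement \(E_3'\) with \(E_3\oplus E_3'\cong X\times\C^N\), so stable isomorphism gives \(E_1\oplus(X\times\C^N)\cong E_2\oplus(X\times\C^N)\). Cancelling one factor at a time reduces everything to the implication that \(E_1\oplus(X\times\C)\cong E_2\oplus(X\times\C)\) forces \(E_1\cong E_2\); note that the ranks only grow during this process, so the bound \(k\ge k_1\) is preserved at every step. Writing \(V\defeq E_2\oplus(X\times\C)\) of rank~\(k+1\), let \(t\) be its standard unit ``real'' section \((0,1)\), and given an isomorphism~\(\phi\) regard \(\phi(0,1)\) as a second nowhere-vanishing ``real'' section of~\(V\).

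The heart of the argument is to homotope these two sections into one another through nowhere-vanishing ``real'' sections. This is a section-extension problem over \(X\times[0,1]\) relative to \(X\times\{0,1\}\), whose obstruction over an \(n\)-cell now lies in \(\pi_n\) of the fibre sphere. Since \(S^{k}\) is \((k-1)\)-connected and \(S^{2k+1}\) is \(2k\)-connected, these groups vanish once \(k-1\ge d_0\) and \(2k\ge d_1\), that is, once \(k\ge k_1\). A homotopy of unit ``real'' sections then lifts, by the covering homotopy property for the evaluation fibration from the bundle of ``real'' unitary automorphisms of~\(V\) onto its unit sphere bundle, to a path of ``real'' unitary automorphisms carrying \(\phi(0,1)\) to~\(t\). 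Composing~\(\phi\) with the endpoint of this path gives a ``real'' isomorphism \(g\colon E_1\oplus(X\times\C)\to V\) with \(g(0,1)=(0,1)\). In each fibre \(g\) is therefore block lower-triangular with the identity in the trivial corner, so its \(E_1\to E_2\) block, obtained by composing with the orthogonal projection, has the same determinant as~\(g\) and is invertible; equivariance makes this block the desired ``real'' isomorphism \(E_1\cong E_2\).

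The main obstacle I anticipate is the careful equivariant obstruction-theoretic bookkeeping: one must run the free and trivial cells simultaneously, keep track of the conjugate-linearity constraint that replaces complex by real spheres over the fixed cells, and check that the stated values of~\(k_0\) and~\(k_1\) are exactly those that force every obstruction group in range to vanish. Making precise the lift of a ``real'' section homotopy to a homotopy of ``real'' unitary automorphisms—so that the final isomorphism genuinely respects~\(\sigma\)—also requires attention, although it is formally the same fibration argument as in the purely complex case. Finally, since the applications need the relative statement for \(X\times[0,1]\) rel \(X\times\{0,1\}\), the whole induction should be phrased relative to the subcomplex~\(A\), with the sections and homotopies prescribed on~\(A\) and extended over the remaining cells of~\((X,A)\); this costs nothing beyond carrying the relative cells through the same counts.
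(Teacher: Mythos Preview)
Your proposal is correct and, for part~\ref{the:main_real_1}, is exactly the paper's argument: split off one ``real'' line at a time by extending a nowhere-vanishing $\Z/2$-equivariant section cell by cell, with fibre the real sphere $S^{r-1}$ over trivial cells and the complex sphere $S^{2r-1}$ over free cells, giving precisely the bound~$k_0$.

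For part~\ref{the:main_real_2} the core reduction is also the same---cancel one trivial line at a time and solve a relative section-extension problem on $X\times[0,1]$ with the same dimension count yielding~$k_1$---but the endgame differs slightly. The paper packages this as follows: pull $E_1\oplus\theta^1_X$ back to $Y=X\times I$, use the identity on $X\times\{0\}$ and the given isomorphism~$\psi$ on $X\times\{1\}$ to write $E|_B\cong E_0\oplus\theta^1_B$ on $B=X\times\partial I\cup A\times I$, then invoke the relative form of part~\ref{the:main_real_1} to split $E\cong E_0\oplus\theta^1_Y$ on all of~$Y$; the complement~$E_0$ then restricts to~$E_1$ and~$E_2$ on the two ends, and homotopy invariance of bundles gives $E_1\cong E_2$. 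This avoids your covering-homotopy lift from the sphere bundle to the automorphism bundle and the subsequent block-triangular argument. Both routes are valid; the paper's is a bit shorter because it never leaves the category of bundles, whereas yours trades that for an explicit automorphism and hence a more concrete isomorphism $E_1\to E_2$.
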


\begin{theorem}
  \label{the:main_quaternion}
  Let \(d_1,d_0,k\in\N\).  Let~\(X\) be a \(\Z/2\)-CW-complex.  Assume
  that the free cells in~\((X,A)\) have at most dimension~\(d_1\) and
  that the trivial cells have at most dimension~\(d_0\).  Let
  \[
    k_0 \defeq \max \left\{ \left\lceil\frac{d_0-3}{2}\right\rceil,
      \left\lceil \frac{d_1-1}{2} \right\rceil \right\},\qquad
    k_1 \defeq \max \left\{ \left\lceil\frac{d_0-2}{2} \right\rceil,
    \left\lceil \frac{d_1}{2} \right\rceil \right\}.
  \]
  \begin{enumerate}
  \item \label{the:main_quaternion_1}%
    Let~\(E\) be a ``quaternionic'' vector bundle over~\(X\) of rank
    \(k\ge k_0\).  There is an isomorphism
    \(E\cong E_0 \oplus \theta^{2\lfloor (k-k_0)/2\rfloor}_X\) for
    some ``quaternionic'' vector bundle~\(E_0\) over~$X$ and the
    trivial ``quaternionic'' vector
    bundle~$\theta^{2\lfloor (k-k_0)/2\rfloor}_X$ of
    rank~$2\lfloor (k-k_0)/2\rfloor$.
  \item \label{the:main_quaternion_2}%
    Let \(E_1\) and~\(E_2\) be two ``quaternionic'' vector bundles
    over~\(X\) of rank \(k\ge k_1\).  If \(E_1\) and~\(E_2\) are
    stably isomorphic, that is, $E_1 \oplus E_3 \cong E_2 \oplus E_3$
    for some ``quaternionic'' vector bundle~$E_3$, then they are
    isomorphic.
  \end{enumerate}
\end{theorem}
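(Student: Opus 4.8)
The overall plan is to mimic the proof of Theorem~\ref{the:main_real}, with the real structure carried by the fixed points replaced by a quaternionic one. For part~\ref{the:main_quaternion_1}, set $m=\lfloor(k-k_0)/2\rfloor$ and reformulate the desired splitting as the existence of an equivariant quaternionic bundle monomorphism $\iota\colon\theta^{2m}_X\hookrightarrow E$; its orthogonal complement for an invariant metric is then the summand $E_0$. Over a free cell the involution interchanges the cell with its mirror, so prescribing $\iota$ on the cell is the same as prescribing a complex $2m$-frame in the fibre $\C^k$, and the relevant space of choices is the complex Stiefel manifold $V_{2m}(\C^k)$, which is $2(k-2m)$-connected. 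Over a trivial cell the structure map restricts to a genuine quaternionic structure on $E$, so $\iota$ is a quaternionic $m$-frame in $\mathbb H^{k/2}$ (forcing $k$ even wherever fixed points occur), and the space of choices is the quaternionic Stiefel manifold $V_m(\mathbb H^{k/2})$, which is $\bigl(4(k/2-m)+2\bigr)$-connected.

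I would construct $\iota$ by the inductive extension over the cells of $(X,A)$ described in the introduction. Because $E$ is trivial over each cell, extending $\iota$ across a free $n$-cell is the extension of a map $S^{n-1}\to V_{2m}(\C^k)$ over the disc, with obstruction in $\pi_{n-1}(V_{2m}(\C^k))$, and across a trivial $n$-cell the extension of a map $S^{n-1}\to V_m(\mathbb H^{k/2})$, with obstruction in $\pi_{n-1}(V_m(\mathbb H^{k/2}))$. The choice $m=\lfloor(k-k_0)/2\rfloor$ makes $2(k-2m)\ge 2k_0\ge d_1-1$ and $4(k/2-m)+2\ge 2k_0+2\ge d_0-1$, so every such obstruction vanishes for free cells of dimension $\le d_1$ and trivial cells of dimension $\le d_0$. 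The floor in $2\lfloor(k-k_0)/2\rfloor$ is exactly the parity constraint coming from the complex rank~$2$ of a quaternionic line.

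For part~\ref{the:main_quaternion_2} I would deduce cancellation from the relative form of part~\ref{the:main_quaternion_1}, i.e.\ from the case in which a trivial summand is prescribed on~$A$ and extended over~$X$. Over a finite\nb-dimensional base every quaternionic bundle has an inverse, so choosing $E_3'$ with $E_3\oplus E_3'\cong\theta^{2N}_X$ turns the hypothesis into $E_1\oplus\theta^{2N}_X\cong E_2\oplus\theta^{2N}_X$, and it is enough to cancel one trivial quaternionic line at a time. An isomorphism $E_1\oplus\theta^{2}_X\cong E_2\oplus\theta^{2}_X$ equips the pullback of $E_1\oplus\theta^2_X$ to $X\times[0,1]$ with a trivial quaternionic rank\nb-$2$ subbundle over $X\times\{0,1\}$: the evident summand at $0$ and its image under the isomorphism at $1$. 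Applying part~\ref{the:main_quaternion_1} relative to the subspace $X\times\{0,1\}$ extends this subbundle over all of $X\times[0,1]$, and the complementary bundle restricts to the sought isomorphism $E_1\cong E_2$. Here the relevant cells of $(X\times[0,1],X\times\{0,1\})$ have dimension one larger, and splitting off a single line from the rank\nb-$(k+2)$ bundle uses $V_2(\C^{k+2})$ and $V_1(\mathbb H^{(k+2)/2})=S^{2k+3}$, whose connectivities turn the inequalities $d_1\le 2k$ and $d_0\le 2k+2$ into the bound $k\ge k_1$.

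The step I expect to be most delicate is organising the equivariant induction so that free and fixed cells are handled coherently: the fibre of the Stiefel bundle in which $\iota$ is a section jumps from $V_{2m}(\C^k)$ to $V_m(\mathbb H^{k/2})$ as a free cell is attached along part of the fixed locus, and one must check that a monomorphism already defined on the boundary of a trivial cell, which on the adjacent free cells only sees the complex structure, does extend subject to the stronger quaternionic constraint. Establishing the precise connectivity of the quaternionic Stiefel manifolds and carrying the parity bookkeeping through the floor functions are the remaining, more routine, technical points.
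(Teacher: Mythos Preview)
Your proposal is correct and matches the paper's approach: the paper likewise constructs an injective equivariant bundle map $\theta^2_X\hookrightarrow E$ by obstruction theory, using the quaternionic structure on~$X^\tau$ and the complex structure on free cells, and deduces part~\ref{the:main_quaternion_2} from the relative form of part~\ref{the:main_quaternion_1} applied to $X\times[0,1]$ relative to $X\times\{0,1\}$ (the paper splits off one quaternionic line at a time rather than all~$m$ at once, but this is immaterial). Your stated worry about the induction dissolves once you note that $X^\tau$ is a \emph{subcomplex} of~$X$---the interior of a free cell has no $\tau$\nb-fixed points---so the paper simply builds the section on all of~$X^\tau$ first via the relative Husemoller result for $\mathbb{H}$\nb-bundles and only then extends over the free cells; in particular the boundary of a trivial cell meets only trivial cells, and no ``jump'' between Stiefel fibres ever has to be negotiated.
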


\begin{proposition}
  \label{pro:torus_dim_four}
  Two ``real'' or ``quaternionic'' vector bundles over~\(\T^d\) with
  \(d\le 4\) are already isomorphic once they are stably isomorphic.
\end{proposition}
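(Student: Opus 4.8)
The plan is to deduce the statement from Theorems~\ref{the:main_real} and~\ref{the:main_quaternion}, applied to \(X=\T^d\) with the conjugation involution \(z\mapsto\conj z\) and \(A=\emptyset\), supplemented by a separate argument for the few ranks that those theorems do not reach. First I would equip \(\T^d\) with a \(\Z/2\)-CW structure: being a smooth manifold with a smooth involution, it has an equivariant triangulation by~\cite{Illman:Equivariant_triangulations} and hence is a \(\Z/2\)-CW-complex. Its fixed-point set is \((\T^d)^{\Z/2}=\{\pm1\}^d\), a finite set; since every trivial cell lies in the fixed-point set, all trivial cells are \(0\)-cells and I may take \(d_0=0\), while \(d_1=\dim\T^d=d\) bounds the dimension of the free cells. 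With these values the relevant threshold is \(k_1=\max\{1,\lceil d/2\rceil\}\) in the ``real'' case and \(k_1=\lceil d/2\rceil\) in the ``quaternionic'' case, and in both cases \(k_1\le2\) whenever \(d\le4\).

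Since \(\T^d\) is connected, stably isomorphic bundles have equal rank, so it suffices to compare two bundles \(E_1,E_2\) of a common rank~\(r\). In the ``quaternionic'' case the fibre over a fixed point (one exists because \(\{\pm1\}^d\neq\emptyset\)) is a quaternionic vector space, so \(r\) is even; hence either \(r=0\), in which case both bundles vanish, or \(r\ge2\ge k_1\), in which case the second part of Theorem~\ref{the:main_quaternion} gives \(E_1\cong E_2\). In the ``real'' case the second part of Theorem~\ref{the:main_real} settles all ranks \(r\ge2\ge k_1\), and \(r=0\) is again trivial, so the only outstanding case is \(r=1\).

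For ``real'' line bundles I would argue with the determinant. The top exterior power \(\det E=\Lambda^r E\) of a ``real'' bundle is a ``real'' line bundle, one has \(\det(E\oplus E')\cong\det E\otimes\det E'\), and ``real'' line bundles form an abelian group under \(\otimes\) with inverse the dual (the structure map of~\(E\) induces one on \(E^*\), and the evaluation \(L\otimes L^*\cong X\times\C\) is ``real''). Applying \(\det\) to a stable isomorphism \(E_1\oplus E_3\cong E_2\oplus E_3\) and cancelling the invertible factor \(\det E_3\) yields \(\det E_1\cong\det E_2\); when \(r=1\) we have \(E_i=\det E_i\), so \(E_1\cong E_2\). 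Together with the previous paragraph this proves the proposition.

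The main obstacle is precisely this last case: Theorem~\ref{the:main_real} does not reach rank-\(1\) ``real'' bundles when \(d\in\{3,4\}\), and one needs the independent determinant argument above, whereas the ``quaternionic'' statement is covered in full by Theorem~\ref{the:main_quaternion} because ``quaternionic'' bundles have even rank. The only other point requiring care is the bookkeeping that the conjugation involution forces \(d_0=0\) and \(d_1=d\), so that the thresholds \(k_1\) really fall to \(\le2\) for \(d\le4\).
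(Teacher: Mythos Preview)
Your proof is correct and follows the same overall route as the paper: compute $d_0=0$, $d_1=d$ for~$\T^d$, deduce $k_1\le 2$ from Theorems \ref{the:main_real} and~\ref{the:main_quaternion}, dispose of rank~$0$ trivially, and observe that ``quaternionic'' bundles over~$\T^d$ have even rank because the fixed-point set is nonempty.  The only substantive difference is the rank-$1$ ``real'' case.  The paper dispatches it by citing \cite{De_Nittis-Gomi:Real_Bloch}*{Proposition~5.3}, which says that \emph{every} ``real'' line bundle over~$\T^d$ is trivial.  Your determinant argument instead shows directly that stably isomorphic ``real'' line bundles are isomorphic, by cancelling $\det E_3$ in the Picard group of ``real'' line bundles under~$\otimes$.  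Your route is self-contained and valid over any involutive base, not just~$\T^d$; the paper's citation gives the stronger fact that this Picard group is trivial for~$\T^d$, but relies on an external reference.
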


\begin{proof}
  Here we are dealing with the \(\Z/2\)-CW-complex~\(\T^d\), which has
  $d_0=0$ and $d_1 =d\le 4$.  This gives $k_1 \le 2$ both in
  Theorem~\ref{the:main_real} and in
  Theorem~\ref{the:main_quaternion}.  So the assertion holds for
  ``real'' and ``quaternionic'' bundles of rank at least~\(2\).  All
  rank-zero bundles are trivial.  So it only remains to prove the
  statement for bundles of rank one.  All rank-one ``real'' bundles
  over~\(\T^d\) for all \(d\in\N\) are trivial by
  \cite{De_Nittis-Gomi:Real_Bloch}*{Proposition~5.3}.  Since the torus
  has $\tau$\nb-fixed points, any ``quaternionic'' bundle
  over~\(\T^d\) has even rank
  (see~\cite{De_Nittis-Gomi:Quaternionic_Bloch}), so there are no
  ``quaternionic'' vector bundles of rank one.
\end{proof}

\begin{remark}
  Consider the stabilisation map $[E]\mapsto [E\oplus \theta^{m-k}_X]$
  between the sets of ``real'' or ``quaternionic'' vector bundles of
  rank $k$ and~$m$ for $m\ge k$; in the ``quaternionic'' case, this
  only works if $m-k$ is even.  Theorems \ref{the:main_real}
  and~\ref{the:main_quaternion} say that this map is injective for
  $k\ge k_1$ and surjective for $k\ge k_0$.  So it is bijective for
  $k\ge \max\{k_0,k_1\} = k_1$.  For instance, if $d_0,d_1\le 4$,
  this happens if $k\ge2$.  So in these low dimensions, it is no loss
  of generality to restrict attention to ``real'' and ``complex''
  vector bundles of rank at most~$2$.  This is claimed in
  \cite{De_Nittis-Gomi:Quaternionic_Bloch}*{Corollary~2.1} for $d_0=0$
  and $d_1=5$ as well.  We can only confirm the surjectivity of the
  map in this case, however, and the injectivity of the map is not
  addressed in~\cite{De_Nittis-Gomi:Quaternionic_Bloch}.  If $d_0=0$,
  then the threshold~$k_0$ for the map above to be surjective is the
  same as the threshold $\lfloor d/2\rfloor$ in the ``real'' case in
  \cite{De_Nittis-Gomi:Quaternionic_Bloch}*{Theorem~4.25} and the
  threshold $2\lfloor (d+2)/4\rfloor$ in the ``quaternionic'' even
  rank case in \cite{De_Nittis-Gomi:Quaternionic_Bloch}*{Theorem~2.5}.
\end{remark}

In the body of the paper, we state and prove generalisations of
Theorems \ref{the:main_real} and~\ref{the:main_quaternion} for
relative $\Z/2$-CW-complexes, which allow to extend a given direct sum
decomposition on a subspace.  We need the relative versions of the
first statements in Theorems \ref{the:main_real}
and~\ref{the:main_quaternion} to prove the second statements.
Section~\ref{sec:real} contains our results and some notation in the
``real'' case, and Seection~\ref{sec:quaternion} treats the
``quaternionic'' case.  Finally, in Section~\ref{sec:conjugacy}, we
explain how our results imply statements about stable conjugacy and
conjugacy of projections in the physical observable $\Cst$\nb-algebra.

An \emph{involutive space} $(X,\tau)$ is a topological space~$X$ with
a continuous involution $\tau\colon X\to X$, that is,
$\tau^2 = \Id_X$.  Throughout this article, let $(X,A)$ be a relative
$\Z/2$-CW-complex, that is, $A\subseteq X$ is a closed
$\tau$\nb-invariant subspace and~$X$ is gotten from~$A$ by attaching
$\Z/2$-cells of increasing dimensions.  There are two different types
of $\Z/2$-cells, namely, the \emph{free cells}
$\mathbb{D}^j \times \Z/2$ with the generator of~$\Z/2$ acting by
$\tau(x,j) \defeq (x,j+1)$ and the \emph{fixed cells} $\mathbb{D}^j$
with the trivial $\Z/2$-action.  Let~$d_0$ be the supremum of the
dimensions of the fixed cells, which is the dimension of
$X^\tau\setminus A$.  Let~$d_1$ be the supremum of the dimensions of
the free cells, which is the dimension
of~$X\setminus (X^\tau \cup A)$.  Our results only work if
$d_0,d_1<\infty$.

\section{``Real'' vector bundles}
\label{sec:real}

This section proves our main result for ``real'' vector bundles.  Even more,
we state and prove a relative version over relative
$\Z/2$-CW-complexes.

\begin{definition}[\cites{Atiyah:K_Reality, De_Nittis-Gomi:Real_Bloch}]
  A \emph{``real'' vector bundle}
  over an involutive space $(X,\tau)$ is a complex vector bundle
  $\pi \colon E \to X$ with a homeomorphism
  $\Theta\colon E \to E$ such that
  \begin{enumerate}
  \item $\pi \circ \Theta=  \tau  \circ \pi$;
  \item $\Theta$ is fibrewise additive and
    $\Theta(\lambda p)=\conj{\lambda}p$ for all $\lambda \in \C$ and
    $p\in E$, where $\conj{\lambda}$ is the complex conjugate of
    $\lambda$;
  \item $\Theta^2=\Id_E$.
  \end{enumerate}
  The bundle has \emph{rank}~$k$ if all its fibres are isomorphic
  to~$\C^k$.
  
  The \emph{trivial ``real'' vector bundle} of rank~$k$ over~$X$ is
  $X\times\C^k$ with the obvious, trivial $\C$\nb-vector bundle
  structure and $\Theta(x,v) \defeq (\tau(x),\conj{v})$.  It is
  denoted by~$\theta^k_X$.
\end{definition}

\begin{proposition}[\cite{De_Nittis-Gomi:Real_Bloch}]
  \label{complexification}
  Let~$X$ be a space and let $k \in \N$.  Then any ``real'' vector
  bundle over $(X,\Id_X)$ is the complexification of an ordinary real
  vector bundle.
\end{proposition}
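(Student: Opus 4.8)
The involution $\tau = \Id_X$ forces $\pi\circ\Theta = \pi$, so~$\Theta$ preserves each fibre~$E_x$ and restricts there to a conjugate-linear involution $\Theta_x\colon E_x\to E_x$. The plan is to produce the desired real bundle as the fixed-point set
\[
  E^\Theta \defeq \{p\in E : \Theta(p)=p\},
\]
and to show that this is a locally trivial real vector bundle whose complexification is~$E$. Fibrewise this is pure linear algebra: a conjugate-linear involution on~$\C^k$ is exactly a \emph{real structure}, its fixed set $(E_x)^{\Theta_x}$ is a real subspace with $E_x = (E_x)^{\Theta_x}\oplus i\,(E_x)^{\Theta_x}$, and the inclusion induces an isomorphism $(E_x)^{\Theta_x}\otimes_\R\C \xrightarrow{\sim} E_x$. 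So the only real content is local triviality.

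First I would reduce local triviality to a statement about matrices. Over a small neighbourhood~$U$ of a point~$x_0$, choose a complex trivialisation $E|_U\cong U\times\C^k$. In this frame~$\Theta$ takes the form $(x,v)\mapsto(x,A(x)\conj{v})$ for a continuous map $A\colon U\to\mathrm{GL}_k(\C)$, and $\Theta^2=\Id_E$ becomes the identity $A(x)\,\conj{A(x)}=I$ for all $x\in U$. The goal is to find, after shrinking~$U$, a continuous change of frame $g\colon U\to\mathrm{GL}_k(\C)$ for which the transformed matrix $g(x)\,A(x)\,\conj{g(x)}^{-1}$ equals the identity; in such an \emph{adapted} frame~$\Theta$ becomes the standard conjugation $(x,v)\mapsto(x,\conj{v})$. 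Then $E^\Theta$ is locally $U\times\R^k$, and a transition function~$h$ between two adapted frames must satisfy $h(x)\conj{v}=\conj{h(x)v}$, forcing $h(x)=\conj{h(x)}$, that is, $h(x)\in\mathrm{GL}_k(\R)$. Thus~$E$ is built from a real cocycle and is the complexification of the real bundle~$E^\Theta$ carrying that cocycle.

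The main obstacle is the existence of the straightening~$g$, equivalently solving $A(x)=g(x)^{-1}\,\conj{g(x)}$ continuously. I would handle this through the left action of $\mathrm{GL}_k(\C)$ on the set $R_k\defeq\{A\in\mathrm{GL}_k(\C) : A\,\conj{A}=I\}$ of real structures given by $g\cdot A = g\,A\,\conj{g}^{-1}$; transitivity is the linear-algebra fact that all real structures on~$\C^k$ are conjugate to the standard one, and the stabiliser of~$I$ is precisely $\mathrm{GL}_k(\R)$. Hence $R_k\cong\mathrm{GL}_k(\C)/\mathrm{GL}_k(\R)$, and the quotient map $\mathrm{GL}_k(\C)\to\mathrm{GL}_k(\C)/\mathrm{GL}_k(\R)$ is a locally trivial principal $\mathrm{GL}_k(\R)$-bundle, being the projection of a Lie group onto a homogeneous space, so it admits a continuous local section near the class of~$A(x_0)$. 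Composing $x\mapsto A(x)$ with this section yields, on a possibly smaller neighbourhood, a continuous lift $\tilde g$ with $\tilde g(x)\,\conj{\tilde g(x)}^{-1}=A(x)$; setting $g\defeq\tilde g^{-1}$ gives $g(x)^{-1}\,\conj{g(x)}=A(x)$ as required, completing the argument.
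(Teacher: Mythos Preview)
Your argument is correct. The paper itself does not supply a proof of this proposition; it is quoted from \cite{De_Nittis-Gomi:Real_Bloch}, so there is nothing in the paper to compare against directly.

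That said, your route through the homogeneous space $R_k\cong\mathrm{GL}_k(\C)/\mathrm{GL}_k(\R)$ and local sections, while sound, is heavier than necessary. A shorter path: regard~$E$ as a \emph{real} vector bundle of rank~$2k$; then~$\Theta$ is an $\R$\nb-linear bundle endomorphism with $\Theta^2=\Id_E$, so the idempotent $P\defeq\tfrac12(\Id_E+\Theta)$ splits~$E$ as $E^\Theta\oplus\ker P$ in the category of real vector bundles. The image and kernel of a bundle idempotent are automatically locally trivial subbundles, so no explicit straightening of~$A(x)$ is needed. Multiplication by~$i$ swaps the two summands (since $\Theta(i p)=-i\,\Theta(p)$), giving $E=E^\Theta\oplus i\,E^\Theta\cong E^\Theta\otimes_\R\C$ directly. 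Your approach has the advantage of producing explicit adapted local frames and identifying the transition functions as lying in $\mathrm{GL}_k(\R)$, which is a slightly stronger conclusion; the eigenbundle argument trades this for brevity.
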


Therefore, for any involutive space~$(X,\tau)$, the restriction~$E$ on
the subset $X^\tau\subseteq X$ of $\tau$\nb-fixed points is a
complexification of a real vector bundle, namely,
\[
  E|_{X^\tau} \cong E^\Theta\otimes_\R \C,
\]
where~$E^\Theta$ is the set of fixed points of~$\Theta$, which is an
$\R$\nb-vector bundle over~$X^\tau$.

\bigskip

Let~$\field$ denote $\R$, $\C$, or $\mathbb{H}$, and let
$c=\dim_\R \field$.  Recall a classical result:

\begin{proposition}[\cite{Husemoller:Fibre_bundles_3}*{Chapter~9,
    Proposition~1.1}]
  \label{Husemoller}
  Let~$\xi^k$ be a $k$\nb-dimensional $\field$-vector bundle over a
  CW-complex~$X$ with $d \leq ck-1$.  Then~$\xi$ is isomorphic to
  $\eta \oplus (X\times \field)$ for some $\field$\nb-vector
  bundle~$\eta$ over~$X$.
\end{proposition}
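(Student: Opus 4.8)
The plan is to reduce the statement to producing a nowhere-vanishing section of~$\xi$. Over a paracompact base any subbundle is a direct summand, so a splitting $\xi\cong\eta\oplus(X\times\field)$ is equivalent to the existence of a trivial $\field$\nb-line subbundle $L\subseteq\xi$; concretely, choosing a metric on~$\xi$ and setting $\eta\defeq L^\perp$ recovers the splitting. In turn, such a subbundle corresponds exactly to a nowhere-zero section $s\colon X\to\xi$: the map $(x,\lambda)\mapsto\lambda\cdot s(x)$ realises $X\times\field$ as a subbundle $L\subseteq\xi$, and conversely a trivialisation of a line subbundle gives the nowhere-zero section sending~$x$ to the image of $(x,1)$. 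A CW-complex is paracompact, so everything reduces to constructing~$s$.

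To build the section, I would pass to the fibre bundle $\xi\setminus 0$ of nonzero vectors, whose fibre $\field^k\setminus\{0\}$ deformation retracts onto the unit sphere $S^{ck-1}$ in $\field^k\cong\R^{ck}$. The decisive input is that $S^{ck-1}$ is $(ck-2)$\nb-connected, that is, $\pi_i(S^{ck-1})=0$ for all $i\le ck-2$. I then construct~$s$ by induction over the skeleta of~$X$: over the $0$\nb-skeleton one simply picks a nonzero vector in each fibre, and one extends the section across cells of increasing dimension.

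The inductive step is the crux of the argument. To extend a section already defined on the boundary $S^{n-1}$ of an $n$\nb-cell over the whole cell, one trivialises~$\xi$ over the cell, which is possible because the cell is contractible, and is then left with the problem of extending a map $S^{n-1}\to S^{ck-1}$ to a map $D^n\to S^{ck-1}$. The obstruction is the class of the boundary map in $\pi_{n-1}(S^{ck-1})$, and this vanishes whenever $n-1\le ck-2$, that is, whenever $n\le ck-1$. The hypothesis $d\le ck-1$ ensures that every cell has dimension $n\le ck-1$, so every obstruction vanishes and the section extends over all of~$X$.

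The main obstacle, and really the only point requiring care, is the matching of the connectivity of the fibre to the dimension of~$X$. Because $\pi_{n-1}(S^{ck-1})$ vanishes for every relevant~$n$, the obstruction to each individual cell extension is automatically zero, and one does not even need the full obstruction cohomology $H^n(X;\pi_{n-1}(S^{ck-1}))$: the section can be extended greedily, cell by cell. No additional work is required in the quaternionic case, since the trivialisation over a cell is $\field$\nb-linear, so the line subbundle~$L$ spanned by~$s$ is automatically a trivial $\field$\nb-line bundle irrespective of the commutativity of~$\field$.
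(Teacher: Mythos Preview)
Your argument is correct and matches the approach the paper sketches: the paper does not prove this classical result in full but notes that the key ingredient is Husemoller's section-extension theorem (Chapter~2, Theorem~7.1), which extends sections of a fibre bundle over a CW-complex cell by cell when the fibre is sufficiently connected. Your reduction to a nowhere-vanishing section of~$\xi$, identification of the fibre with the $(ck-2)$\nb-connected space $\field^k\setminus\{0\}\simeq S^{ck-1}$, and skeleton-by-skeleton extension using $\pi_{n-1}(S^{ck-1})=0$ for $n\le ck-1$ is exactly that argument spelled out.
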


The key ingredient in the proof is
\cite{Husemoller:Fibre_bundles_3}*{Theorem~7.1 in Chapter~2}, which
allows to extend sections of fibre bundles under a higher connectedness
assumption.  This proof technique provides a relative
version of the proposition for a relative CW-complex~$(X,A)$, which
shows that a given direct sum decomposition on~$A$ extends to~$X$.

We now formulate and prove a relative version of
Theorem~\ref{the:main_real}.\ref{the:main_real_1}, which generalises
the relative version of Proposition~\ref{Husemoller} for $\field = \R$
to ``real'' bundles.  Our proof follows the proof of
\cite{Husemoller:Fibre_bundles_3}*{Theorem~1.2} and
\cite{De_Nittis-Gomi:Real_Bloch}*{Proposition~4.23}.  Our main task is
to remove the extra assumption in the latter result that fixed point
cells are only of dimension~$0$.

\begin{theorem}
  \label{the:main_real_relative_1}
  Let \(d_1,d_0,k\in\N\).  Let \((X,A)\) be a relative
  \(\Z/2\)-CW-complex.  Assume that the free cells in~\((X,A)\) have
  at most dimension~\(d_1\) and that the trivial cells have at most
  dimension~\(d_0\).  Let
  \[
    k_0 \defeq
    \max \left\{ \left\lceil \frac{d_1-1}{2} \right\rceil,
      d_0  \right\}.
  \]
  Let~\(E\) be a ``real'' vector bundle over~\(X\) of rank
  \(k\ge k_0\).  Let an isomorphism
  $E|_A \cong E_0^A \oplus \theta_A^{k-k_0}$ for some ``real'' vector
  bundle~\(E_0^A\) over~$A$ be given.  This extends to an isomorphism
  \(E\cong E_0 \oplus \theta_X^{k-k_0}\) for some ``real'' vector
  bundle~\(E_0\) over~$X$.
\end{theorem}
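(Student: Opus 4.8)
The plan is to follow the inductive strategy of \cite{Husemoller:Fibre_bundles_3}*{Theorem~1.2} and \cite{De_Nittis-Gomi:Real_Bloch}*{Proposition~4.23}, splitting off one trivial ``real'' line summand at a time while propagating the given splitting on~$A$. First I would fix a $\Theta$\nb-invariant Hermitian metric on~$E$, obtained by averaging an arbitrary Hermitian metric over the $\Z/2$\nb-action generated by~$\Theta$; such a metric makes the orthogonal complement of any $\Theta$\nb-invariant subbundle again $\Theta$\nb-invariant. The isomorphism $E|_A \cong E_0^A \oplus \theta_A^{k-k_0}$ supplies $k-k_0$ pointwise linearly independent ``real'' sections of $E|_A$, namely the images of the standard basis of $\theta_A^{k-k_0}$; after an equivariant Gram--Schmidt process, which respects the reality condition because the metric is $\Theta$\nb-invariant, I may assume these sections are orthonormal. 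A single nowhere-vanishing ``real'' section~$s$ of a ``real'' bundle spans a subbundle isomorphic to~$\theta_X^1$, because $s$ being $\Theta$\nb-fixed means the evident trivialisation intertwines~$\Theta$ with the standard real structure on $X\times\C$, and its orthogonal complement is again a ``real'' bundle of rank one less. Thus the whole statement reduces to the following extension problem, applied $k-k_0$ times: given a ``real'' bundle of rank~$k'$ with $k_0+1\le k'\le k$ and a nowhere-vanishing ``real'' section over~$A$, extend it to a nowhere-vanishing ``real'' section over~$X$. Since the constraint becomes most stringent at the smallest rank $k'=k_0+1$, it suffices to solve this extension problem at every rank $k'\ge k_0+1$.

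For the extension problem I would argue by induction over the cells of $(X,A)$, ordered by dimension, using the section-extension technique of \cite{Husemoller:Fibre_bundles_3}*{Theorem~7.1 in Chapter~2} and distinguishing the two cell types. A nowhere-vanishing ``real'' section is a $\Theta$\nb-equivariant section of the unit sphere bundle $\Sphere(E)\subseteq E$. Over a free cell $\mathbb{D}^j\times\Z/2$ the map~$\Theta$ acts freely, so equivariance determines the section over $\mathbb{D}^j\times\{1\}$ from its values over $\mathbb{D}^j\times\{0\}$; extending over the cell therefore amounts to the ordinary problem of extending a section of a bundle with fibre $\Sphere^{2k'-1}$ from $\Sphere^{j-1}$ to $\mathbb{D}^j$, whose obstruction lies in $\pi_{j-1}(\Sphere^{2k'-1})$ and vanishes once $j\le 2k'-1$. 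Over a fixed cell $\mathbb{D}^j$, whose equivariant attaching map necessarily lands in the fixed-point set~$X^\tau$, Proposition~\ref{complexification} identifies $E$ over~$X^\tau$ with $E^\Theta\otimes_\R\C$, so that a ``real'' section there is a section of the real sphere bundle $\Sphere(E^\Theta)$ with fibre $\Sphere^{k'-1}$; its obstruction lies in $\pi_{j-1}(\Sphere^{k'-1})$ and vanishes once $j\le k'-1$. At the binding rank $k'=k_0+1$ these two conditions read $d_1\le 2k_0+1$ and $d_0\le k_0$, which is precisely $k_0\ge\max\{\lceil (d_1-1)/2\rceil, d_0\}$.

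The main obstacle is the treatment of fixed cells of positive dimension, exactly the case excluded in \cite{De_Nittis-Gomi:Real_Bloch}*{Proposition~4.23}. Because the relevant fibre over the fixed part is the real sphere $\Sphere^{k'-1}$, which is only $(k'-2)$\nb-connected rather than $(2k'-2)$\nb-connected, the fixed-cell dimension~$d_0$ enters the threshold linearly rather than halved; this is the source of the $d_0$ term in~$k_0$. I would also need to verify the book-keeping at cell boundaries: the boundary of a free cell may meet~$X^\tau$, where the section already takes values in the real subsphere $\Sphere^{k'-1}\subseteq\Sphere^{2k'-1}$, but since the interior of a free cell contains no fixed points there is no further reality constraint there, so the extension into the larger sphere is unobstructed, and the forced equivariant extension over $\mathbb{D}^j\times\{1\}$ is automatically compatible along the shared boundary with the already-defined skeleton.
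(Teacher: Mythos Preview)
Your proposal is correct and follows essentially the same approach as the paper's proof: reduce to splitting off one trivial ``real'' line summand at a time, translate this into extending a nowhere-vanishing $\Theta$\nb-equivariant section, and then treat fixed cells via the real sphere bundle $\Sphere(E^\Theta)$ with fibre~$\Sphere^{k'-1}$ and free cells via the complex sphere with fibre~$\Sphere^{2k'-1}$, arriving at exactly the constraints $d_0\le k_0$ and $d_1\le 2k_0+1$. The only organisational difference is that the paper first extends over all of $A\cup X^\tau$ (using the relative form of Proposition~\ref{Husemoller} for $\field=\R$) and afterwards handles the free cells, whereas you interleave the two cell types by dimension; since $X^\tau$ is a subcomplex built from~$A^\tau$ by attaching precisely the trivial cells, the two orderings are equivalent.
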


\begin{proof}
  We are going to extend an isomorphism
  $E|_A \cong E_0^A \oplus \theta^1_A$ to an isomorphism
  $E\cong E_0\oplus \theta^1_X$, assuming $k> k_0$.  Repeating this
  step $k-k_0$~times then gives the result that is stated.  An
  isomorphism $E\cong E_0\oplus \theta^1_X$ contains an injective
  ``real'' vector bundle map $\theta^1_X \hookrightarrow E$.
  Conversely, such an embedding implies an isomorphism
  $E\cong E_0\oplus \theta^1_X$ because any ``real'' vector subbundle
  has an orthogonal complement, which is again a ``real'' vector
  subbundle, and then the direct sum is isomorphic to the whole bundle
  (see~\cite{De_Nittis-Gomi:Real_Bloch}).  An injective ``real''
  vector bundle map $\theta^1_X \hookrightarrow E$ is equivalent to a
  section $s\colon X\to E$ that satisfies $s(x)\neq0$ and
  $\Theta(s(x)) = s(\tau(x))$ for all $x\in X$: then we map
  $\theta^1_X=X\times\C$ to~$E$ by
  $(x,\lambda)\mapsto \lambda\cdot s(x)$.  We call the section
  $\Z/2$-equivariant if $\Theta(s(x)) = s(\tau(x))$ for all $x\in X$.
  Let $E^\times \subset E$ be the subbundle of nonzero vectors.  Our
  task is to extend a $\Z/2$-equivariant section $A\to E^\times|_A$ to
  a $\Z/2$-equivariant section $X\to E^\times$.  The fibres
  of~$E^\times$ are $(\C^k)^\times=\C^k\setminus\{0\}$.  This is
  homotopy equivalent to the sphere~$\Sphere^{2k-1}$, which is
  $2k-2$-connected.

  First, we construct our section on $A\cup X^\tau$.  This is
  equivalent to extending a given section on~$A^\tau$ to a section
  $X^\tau\to (E^\Theta)^\times$ (compare
  Proposition~\ref{complexification}).  The $\R$\nb-vector bundle
  $E^\Theta \twoheadrightarrow X^\tau$ has dimension~$k$.  The proof of
  Proposition~\ref{Husemoller} also allows to extend a section that is
  given on a closed subspace.  The assumption $d_0 \le k_0 \le k-1$
  ensures that the section that is given on~$A^\tau$ may be extended
  to a section $X^\tau\to (E^\Theta)^\times$.  Together with the given
  section on~$A$, we get the desired $\Z/2$-equivariant section
  of~$E^\times$ on $A\cup X^\tau$.

  We prolong this section to all of~$X$ by induction over skeleta.
  Since the cells of the same dimension are disjoint, we may work one
  $\Z/2$-cell at a time.  Since we have already found the section
  on~$X^\tau$, we only encounter free cells of the form
  $\Z/2 \times \mathbb{D}^j$, and $j\le d_1$ by assumption.  We
  are given a $\Z/2$-equivariant section on the boundary
  $\Z/2 \times \partial\mathbb{D}^j$, which we have to extend to
  $\Z/2 \times \mathbb{D}^j$.  The involution~$\tau$ flips the two
  copies of~$\mathbb{D}^j$ in $\Z/2 \times \partial\mathbb{D}^j$.  So
  it suffices to construct the section
  $s\colon \{+1\}\times \mathbb{D}^j \to E^\times$ and then define
  $s(-1,x) \defeq \Theta(s(1,x))$.  This is automatically
  $\Z/2$-equivariant.  The restriction of the bundle~$E$ to
  $\{+1\}\times \mathbb{D}^j$ is trivial because~$\mathbb{D}^j$ is
  contractible.  So our task becomes equivalent to extending a map
  $\partial \mathbb{D}^j \to (\C^k)^\times$ to a map
  $\mathbb{D}^j \to (\C^k)^\times$.  This is possible because
  $j\le d_1\le 2k_0 +1\le 2k-1$.
\end{proof}

\begin{theorem}
  \label{the:main_real_relative_2}
  Let \(d_1,d_0,k\in\N\).  Let \((X,A)\) be a relative
  \(\Z/2\)-CW-complex.  Assume that the free cells in~\((X,A)\) have
  at most dimension~\(d_1\) and that the trivial cells have at most
  dimension~\(d_0\).  Let
  \[
    k_1 \defeq
    \max \left\{ \left\lceil \frac{d_1}{2} \right\rceil,
      d_0+1  \right\}.
  \]
  Let \(E_1\) and~\(E_2\) be two ``real'' vector bundles over~\(X\) of
  rank \(k\ge k_1\).  An isomorphism $E_1|_A \cong E_2|_A$ that
  extends to a stable isomorphism between \(E_1\) and~\(E_2\) on~$X$
  extends to an isomorphism \(E_1\cong E_2\).
\end{theorem}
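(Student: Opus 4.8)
The strategy is to reduce the assertion to cancelling a single trivial line bundle and then to apply the relative splitting result, Theorem~\ref{the:main_real_relative_1}, over the product \(X\times[0,1]\). First I would make the stabilising bundle trivial. Over a finite\nb-dimensional \(\Z/2\)-CW-complex every ``real'' vector bundle~\(E_3\) admits a ``real'' complement~\(E_3'\) with \(E_3\oplus E_3'\cong\theta^N_X\) for some \(N\in\N\) (embed \(E_3\) ``real''\nb-equivariantly into a trivial bundle and pass to the orthogonal complement, as in \cite{De_Nittis-Gomi:Real_Bloch}). Adding \(E_3'\) to the given stable isomorphism and conjugating by the trivialisation on the stabilising summand, I obtain a ``real'' isomorphism \(\Psi\colon E_1\oplus\theta^N_X\cong E_2\oplus\theta^N_X\) whose restriction to~\(A\) is \(\phi_A\oplus\Id\), where \(\phi_A\colon E_1|_A\cong E_2|_A\) is the given isomorphism. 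It then suffices to prove the following cancellation step and to iterate it \(N\)~times: if \(\Psi\colon E_1\oplus\theta^1_X\cong E_2\oplus\theta^1_X\) restricts on~\(A\) to \(\psi_A\oplus\Id\) for some isomorphism \(\psi_A\colon E_1|_A\cong E_2|_A\), then \(\psi_A\) extends to an isomorphism \(E_1\cong E_2\). In the iteration the relevant bundles \(E_i\oplus\theta^{j}_X\) all have rank \(\ge k\ge k_1\), so the rank hypothesis is preserved throughout.

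To prove the cancellation step I would build the ``real'' vector bundle~\(F\) over \(X\times[0,1]\), with involution \(\tau\times\Id\), by clutching: it is the pullback of \(E_1\oplus\theta^1_X\) over \(X\times[0,\tfrac12]\) glued along \(X\times\{\tfrac12\}\) by~\(\Psi\) to the pullback of \(E_2\oplus\theta^1_X\) over \(X\times[\tfrac12,1]\). Thus \(F|_{X\times\{0\}}=E_1\oplus\theta^1_X\) and \(F|_{X\times\{1\}}=E_2\oplus\theta^1_X\), each with its tautological trivial line summand. Over \(A\times[0,1]\) the clutching map \(\Psi|_A=\psi_A\oplus\Id\) respects the splitting, so \(F|_{A\times[0,1]}\cong G\oplus\theta^1_{A\times[0,1]}\), where~\(G\) is clutched from \(E_1|_A\) and~\(E_2|_A\) by~\(\psi_A\). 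These three choices of trivial summand agree on the overlaps \(A\times\{0,1\}\), hence define a splitting of~\(F\) off a~\(\theta^1\) on the \(\Z/2\)-subcomplex \(B\defeq (X\times\{0,1\})\cup(A\times[0,1])\). Now \((X\times[0,1],B)\) is a relative \(\Z/2\)-CW-complex whose cells are the products \(e\times(0,1)\) with~\(e\) a cell of~\((X,A)\); so its free cells have dimension \(\le d_1+1\) and its trivial cells dimension \(\le d_0+1\). The splitting threshold of Theorem~\ref{the:main_real_relative_1} for these dimensions is exactly \(\max\{\lceil d_1/2\rceil,\,d_0+1\}=k_1\), and \(\rank F=k+1>k\ge k_1\), so that theorem extends the splitting on~\(B\) to a splitting \(F\cong F_0\oplus\theta^1_{X\times[0,1]}\) over all of \(X\times[0,1]\).

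Restricting \(F\cong F_0\oplus\theta^1\) to the two ends and comparing with the tautological line summands there gives \(F_0|_{X\times\{0\}}\cong E_1\) and \(F_0|_{X\times\{1\}}\cong E_2\). Since \([0,1]\) is contractible and the involution acts trivially on it, the two end inclusions \(X\hookrightarrow X\times[0,1]\) are \(\Z/2\)-equivariantly homotopic; equivariant homotopy invariance of ``real'' vector bundles therefore supplies a ``real'' isomorphism \(F_0|_{X\times\{0\}}\cong F_0|_{X\times\{1\}}\), and hence \(E_1\cong E_2\). Restricting this isomorphism to~\(A\) is the corresponding comparison for \(F_0|_{A\times[0,1]}=G\); as~\(G\) is clutched by~\(\psi_A\), the comparison recovers~\(\psi_A\), so the isomorphism \(E_1\cong E_2\) extends~\(\psi_A\), as required. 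Feeding this through the iteration of the first paragraph produces an isomorphism \(E_1\cong E_2\) extending the original~\(\phi_A\).

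I expect the main work to be bookkeeping rather than a single hard estimate. The conceptual heart, the dimension count, is clean: taking the product with \([0,1]\) raises both~\(d_0\) and~\(d_1\) by one and turns the threshold~\(k_0\) of the splitting theorem into precisely the threshold~\(k_1\) asserted here. The delicate point is the relative compatibility: arranging the trivial summands of~\(F\) on \(X\times\{0,1\}\) and on \(A\times[0,1]\) to match on the overlaps, and, above all, checking that the final homotopy\nb-invariance isomorphism restricts on~\(A\) to the prescribed~\(\psi_A\). This last fact, that transport across the clutching interval reproduces the clutching map, is exactly what makes the relative statement usable in the induction and is the step I would write out most carefully.
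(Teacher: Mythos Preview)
Your proposal is correct and follows essentially the same route as the paper: reduce to cancelling a single~\(\theta^1\), pass to \(X\times[0,1]\) relative to \(B=(X\times\{0,1\})\cup(A\times[0,1])\), observe that the dimension shift \((d_0,d_1)\mapsto(d_0+1,d_1+1)\) turns the threshold~\(k_0\) of Theorem~\ref{the:main_real_relative_1} into~\(k_1\), and then read off the isomorphism from the resulting bundle~\(F_0\). The only cosmetic difference is that the paper takes~\(F\) to be the pullback of \(E_1\oplus\theta^1_X\) and uses~\(\Psi\) only to identify the restriction at \(X\times\{1\}\), whereas you clutch two halves; these constructions are canonically isomorphic, and your identification of the relative compatibility on~\(A\) as the point deserving the most care is exactly right.
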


\begin{proof}
  Let $\varphi_A\colon E_1|_A \cong E_2|_A$ be the given isomorphism
  on~$A$.  Any ``real'' vector bundle over a finite-dimensional
  $\Z/2$-CW-complex is a direct summand in a trivial ``real'' bundle.
  Therefore, our stable isomorphism assumption implies that there is
  an isomorphism
  $\psi\colon E_1 \oplus \theta^\ell_X \cong E_2 \oplus \theta_X^\ell$
  for some $\ell\ge0$ such that~$\psi|_A$ is
  $\varphi_A \oplus \Id_{\theta_X^\ell}$.  There is nothing to do if
  $\ell=0$.  We are going to prove that there is an isomorphism
  $E_1 \oplus \theta_X^{\ell-1} \cong E_2 \oplus \theta_X^{\ell-1}$
  that extends $\varphi_A \oplus \Id_{\theta_X^{\ell-1}}$.  Repeating
  this step $\ell$~times gives the result we need.
  Replacing~$E_j$ by $E_j \oplus \theta_X^{\ell-1}$, we reduce to the
  case where $\ell=1$.  Thus we may assume an isomorphism
  $\psi\colon E_1 \oplus \theta_X^1 \cong E_2 \oplus \theta_X^1$ in
  the following.

  We want to reduce the proof to
  Theorem~\ref{the:main_real_relative_1}, as in the proof of
  \cite{Husemoller:Fibre_bundles_3}*{Theorem~1.5 in Chapter~9}.  We work
  on $Y=X\times I$ with $I=[0,1]$, equipped with the involution
  $\tau(x,t) \defeq (\tau(x),t)$.  We let~$E$ be the pullback of
  $E_1\oplus \theta_X^1$ to~$Y$.  The relevant dimensions and ranks are
  now
  \[
    d_1(Y)=d_1(X)+1,\qquad
    d_0(Y)=d_0(X)+1,\qquad
    \rank(E)=k+1.
  \]
  We identify the restriction of~$E$ to
  $B\defeq X\times \partial I \cup A\times I \subseteq Y$ with a
  ``real'' vector bundle of the form $E_0 \oplus \theta^1_B$.  Here we
  glue the identity isomorphism from~$E$ to the pull back of
  $E_1 \oplus \theta_X^1$ on $X\times\{0\} \cup A \times [0,1]$ and
  the isomorphism
  $E|_{X\times\{1\}} = E_1 \oplus \theta_X^1 \cong E_2 \oplus
  \theta_X^1$ on $X\times\{1\}$; we may glue this on $A\times\{1\}$
  because the isomorphism~$\psi$ is of the form
  $\varphi_A\oplus \Id_{\theta_A^1}$ on~$A$.  Now
  Theorem~\ref{the:main_real_relative_1} provides an isomorphism
  $E \cong E_0\oplus \theta_X^1$ on all of~$Y$ extending the given
  isomorphism on $B \subseteq Y$.  By construction, the bundle~$E_0$
  on~$Y$ restricts to~$E_1$ on $X\times\{0\} \cup A \times [0,1]$ and
  to~$E_2$ on $X\times\{1\}$, glued together using the given
  isomorphism~$\varphi_A$.  Now, as in the proof
  in~\cite{Husemoller:Fibre_bundles_3}, the existence of such a
  ``real'' vector bundle over~$Y$ implies that there is an isomorphism
  of ``real'' vector bundles $E_1 \cong E_2$ that restricts to the
  given isomorphism on~$A$.
\end{proof}

\section{``Quaternionic'' vector bundles}
\label{sec:quaternion}

The goal of this section is to prove a relative version of
Theorem~\ref{the:main_quaternion}.

\begin{definition}[\cite{De_Nittis-Gomi:Quaternionic_Bloch}]
  A \emph{``quaternionic'' bundle}
  over the involutive space $(X,\tau)$ is a complex vector bundle
  $\pi\colon E \to X$ with a homeomorphism
  $\Theta\colon E\to E$ such that
  \begin{itemize}
  \item $\pi \circ \Theta= \tau \circ \pi$;
  \item $\Theta$ is fibrewise additive and
    $\Theta(\lambda p)=\conj{\lambda}p$ for all $\lambda \in \C$ and
    $p\in E$;
  \item $\Theta^2(x,v) = (x,-v)$ is fibrewise multiplication by~$-1$
    for all $x\in X$, $v\in E_x$.
  \end{itemize}
  The bundle has \emph{rank}~$k$ if all its fibres are isomorphic
  to~$\C^k$.
\end{definition}

The name ``quaternionic'' vector bundles is justified by the
following:

\begin{proposition}[\cite{De_Nittis-Gomi:Quaternionic_Bloch}]
  \label{Q}
  A ``quaternionic'' vector bundle over $(X,\Id_X)$ is equivalent to
  an $\mathbb{H}$\nb-vector bundle, where $a+ b i + c j + d k$ acts by
  $(a+b i) + (c+ d i)\Theta$ on each fibre.  The rank as a
  ``quaternionic'' vector bundle is twice the rank as an
  $\mathbb{H}$\nb-vector bundle because $\mathbb{H}^k= \C^{2 k}$.
\end{proposition}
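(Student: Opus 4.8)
The plan is to show that, when the involution is trivial, the extra structure map~$\Theta$ on a ``quaternionic'' bundle is nothing but fibrewise multiplication by the quaternion~$j$, so that the two kinds of bundle encode literally the same data. First I would note that since $\tau=\Id_X$, the compatibility condition $\pi\circ\Theta=\tau\circ\pi$ forces $\pi\circ\Theta=\pi$, so~$\Theta$ preserves each fibre~$E_x$. On a single fibre, $\Theta_x\colon E_x\to E_x$ is then a conjugate\nb-linear map with $\Theta_x^2=-\Id$. I would make~$E_x$ into a module over~$\mathbb{H}$ by letting~$i$ act as complex scalar multiplication by $i\in\C$, letting~$j$ act as~$\Theta_x$, and letting $k=ij$ act as the composite. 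The quaternion relations are then a short check: $i^2=-1$ and $j^2=\Theta_x^2=-1$ hold by construction, while conjugate\nb-linearity of~$\Theta_x$ gives $\Theta_x\circ i=\conj{i}\,\Theta_x=-i\,\Theta_x$, i.e.\ $ji=-ij$; from this one reads off $k^2=-1$, $jk=i$, and $ki=j$. Thus each fibre becomes an $\mathbb{H}$\nb-vector space, a general quaternion acts by $(a+bi)+(c+di)\Theta$ as asserted, and the complex dimension is twice the quaternionic one.

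Next I would promote this pointwise structure to a bundle structure. The operators $i$ and~$\Theta$ are continuous bundle endomorphisms of~$E$, so the resulting $\mathbb{H}$\nb-module structure on the fibres varies continuously. To exhibit local triviality as an $\mathbb{H}$\nb-bundle I would fix a point, take a complex local frame, and use the $j$\nb-action to complete it to a local $\mathbb{H}$\nb-frame; equivalently, choose a $\Theta$\nb-invariant Hermitian metric and build an $\mathbb{H}$\nb-orthonormal frame fibrewise. This produces a genuine $\mathbb{H}$\nb-vector bundle whose underlying complex bundle is~$E$.

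For the converse and the full equivalence, I would start from an $\mathbb{H}$\nb-vector bundle~$F$, restrict scalars along $\C\hookrightarrow\mathbb{H}$ to view~$F$ as a complex bundle, and set $\Theta\defeq$ the fibrewise action of~$j$. Conjugate\nb-linearity over~$\C$ together with $j^2=-1$ shows that $(F,\Theta)$ is a ``quaternionic'' bundle over $(X,\Id_X)$, and the two assignments are visibly mutually inverse. A $\C$\nb-linear bundle map commutes with~$\Theta$ exactly when it is $\mathbb{H}$\nb-linear, so morphisms match up as well, and the rank statement is the identity $\dim_\C V=2\dim_{\mathbb{H}}V$ coming from $\mathbb{H}^k=\C^{2k}$. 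The only step requiring genuine care is this local triviality, i.e.\ passing from the pointwise $\mathbb{H}$\nb-structure to continuous local $\mathbb{H}$\nb-frames; the rest is formal, and the main pitfall is keeping the left/right module conventions consistent, since it is precisely the relation $ji=-ij$ coming from conjugate\nb-linearity that must yield $k^2=-1$ rather than $k^2=+1$.
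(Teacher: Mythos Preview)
Your argument is correct and is the standard verification: on each fibre the conjugate\nb-linear map~$\Theta$ with $\Theta^2=-\Id$ is precisely a $j$\nb-action, the quaternion relations follow from conjugate\nb-linearity, and the only nontrivial point---local triviality over~$\mathbb{H}$---you handle by producing local $\mathbb{H}$\nb-frames from complex ones via a $\Theta$\nb-invariant metric.

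There is nothing to compare against here: the paper does not supply its own proof of this proposition but simply cites it from \cite{De_Nittis-Gomi:Quaternionic_Bloch}. Your write-up thus goes beyond what the paper records, and the approach you take is exactly the one underlying the cited reference.
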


In particular, the restriction of a ``quaternionic'' vector bundle
to~$X^\tau$ must have even rank.  If~$X$ is connected and
$X^\tau\neq\emptyset$, then this implies that the rank is even on all
of~$X$.  Nevertheless, ``quaternionic'' vector bundles of odd rank are
possible if $X^\tau=\emptyset$.  All trivial ``quaternionic'' bundles
have even rank.  Namely, the trivial ``quaternionic'' vector
bundle~$\theta^{2 k}_X$ over $(X,\tau)$ of rank~$2 k$ is the space
$X \times \C^{2 k}$ with
\[
  \Theta(x,\lambda_1,\lambda_2,\dotsc,\lambda_{2 k -1},\lambda_{2 k})
  \defeq (\tau(x),\conj{\lambda_2},-\conj{\lambda_1},\dotsc,
  \conj{\lambda_{2 k}}, -\conj{\lambda_{2 k -1}}).
\]

A vector bundle map $f\colon \theta_X^1 \to E$ is of the form
$f(x,\lambda_1,\lambda_2) = \lambda_1 s_1(x) + \lambda_2 s_2(x)$ for
two sections $s_1,s_2$ of~$E$.  This map is $\Z/2$-equivariant if and
only if $s_2(x) = -\Theta(s_1(\tau(x)))$ for all $x\in X$.  Thus the
section~$s_1$ already determines~$f$ if it is $\Z/2$-equivariant.  Of
course, $f$ is injective if and only if $s_1(x)$ and $s_2(x)$ are
linearly independent for all $x\in X$.  If $\tau(x)=x$, this is true
once $s_1(x)\neq0$ because then the restriction of~$f$ to the fibre
at~$x$ is an $\mathbb{H}$\nb-linear map $\mathbb{H} \to E_x$.  If,
however, $x\neq \tau(x)$, then $s_1(x)\neq0$ is not sufficient.  We
must ensure that $s_2(x) =-\Theta(s_1(\tau(x)))$ is linearly
independent of $s_1(x)$ as well.  Here a mistake is made
in~\cite{De_Nittis-Gomi:Quaternionic_Bloch}: their argument above
Definition~2.1 why $s_1(x)\neq0$ should suffice for $s_1(x)$
and~$s_2(x)$ to be linearly independent is wrong because it only
implies that the functions $s_1$ and~$s_2$ are linearly independent,
which is much weaker; so their proofs of Propositions 2.4 and~2.7 are
incomplete.  It is easy to fix the proof of their Proposition~2.4, and
our theorem above may replace their Proposition~2.7.


\begin{theorem}
  \label{the:main_quaternion_relative_1}
  Let \(d_1,d_0,k\in\N\).  Let \((X,A)\) be a relative
  \(\Z/2\)-CW-complex.  Assume that the free cells in~\((X,A)\) have
  at most dimension~\(d_1\) and that the trivial cells have at most
  dimension~\(d_0\).  Let
  \[
    k_0 \defeq \max \left\{ \left\lceil\frac{d_0-3}{2}\right\rceil,
      \left\lceil \frac{d_1-1}{2} \right\rceil \right\}.
  \]
  Let~\(E\) be a ``quaternionic'' vector bundle over~\(X\) of rank
  \(k\ge k_0\).  Assume an isomorphism
  $E|_A \cong E_0^A \oplus \theta_A^{2\lfloor (k - k_0)/2\rfloor}$ for
  some ``quaternionic'' vector bundle~\(E_0^A\) over~$A$ is given.
  This isomorphism extends to an isomorphism
  \(E\cong E_0 \oplus \theta_X^{2\lfloor (k - k_0)/2\rfloor}\) for
  some ``quaternionic'' vector bundle~\(E_0\) over~$X$.
\end{theorem}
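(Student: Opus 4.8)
The plan is to mirror the proof of Theorem~\ref{the:main_real_relative_1}: peel off one trivial summand at a time and extend an equivariant section cell by cell. The genuinely new point is that in the ``quaternionic'' case the summand has complex rank~$2$, so that the injectivity of an embedding $\theta^2_X\hookrightarrow E$ couples the value of a section at a point~$x$ with its value at~$\tau(x)$.

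First I would reduce to a single splitting step. Set $m\defeq\lfloor(k-k_0)/2\rfloor$. It suffices to show that, for any ``quaternionic'' bundle~$E'$ over~$X$ of rank $r\ge k_0+2$, an isomorphism $E'|_A\cong E_0^A\oplus\theta^2_A$ on~$A$ extends to an isomorphism $E'\cong E_0\oplus\theta^2_X$ on~$X$. Peeling off the $m$ summands~$\theta^2_A$ one after another and applying this step reduces the rank through the values $k,k-2,\dots,k-2(m-1)$, all of which are at least $k_0+2$ since $2m\le k-k_0$. As in the ``real'' case, a ``quaternionic'' subbundle has a ``quaternionic'' orthogonal complement, so such an isomorphism is the same as a $\Z/2$-equivariant injective bundle map $\theta^2_X\hookrightarrow E'$ extending the given one on~$A$. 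By the discussion before the theorem, such a map is determined by a single section $s_1\colon X\to E'$, the second section being $s_2(x)=-\Theta(s_1(\tau(x)))$, and it is injective exactly when $s_1(x)$ and $\Theta(s_1(\tau(x)))$ are $\C$-linearly independent for every $x\in X$. So the task is to extend~$s_1$, given on~$A$, over all of~$X$ while preserving this independence condition.

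Next I would build~$s_1$ on $A\cup X^\tau$. Over~$X^\tau$ the bundle $E'|_{X^\tau}$ is an $\mathbb{H}$-vector bundle of quaternionic rank $r/2$ by Proposition~\ref{Q}, and at a fixed point the vectors $s_1(x)$ and $\Theta(s_1(x))$ are automatically $\C$-linearly independent as soon as $s_1(x)\neq0$, because they span the $\mathbb{H}$-line through~$s_1(x)$. Thus over~$X^\tau$ the condition reduces to $s_1$ being nowhere zero, and extending the given nowhere-zero section from~$A^\tau$ to~$X^\tau$ is the relative version of Proposition~\ref{Husemoller} for $\field=\mathbb{H}$ applied to $E'|_{X^\tau}$. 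This is possible because $d_0\le 2r-1=4\cdot(r/2)-1$, which follows from $r\ge k_0+2$ together with $k_0\ge\lceil(d_0-3)/2\rceil$, so that $2r-1\ge 2k_0+3\ge d_0$.

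Finally I would prolong~$s_1$ over the free cells by induction over skeleta; this is the main obstacle. With~$s_1$ already defined on~$X^\tau$, only free cells $\Z/2\times\mathbb{D}^j$ with $j\le d_1$ remain, and on the two copies of~$\mathbb{D}^j$ the section~$s_1$ may be prescribed independently. Trivialising~$E'$ over each copy, the required independence of $s_1(x)$ and $\Theta(s_1(\tau(x)))$ becomes the condition that the pair $\bigl(s_1|_{\{+1\}\times\mathbb{D}^j},\,\Theta\circ s_1|_{\{-1\}\times\mathbb{D}^j}\bigr)$ is, at each point of~$\mathbb{D}^j$, a $2$-frame in~$\C^r$; the analogous condition on the other copy is equivalent after applying the conjugate-linear map~$\Theta$ and using $\Theta^2=-\Id$, and at boundary points lying over~$X^\tau$ it holds automatically. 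Hence extending~$s_1$ over a free cell amounts to extending a map $\partial\mathbb{D}^j\to V_2(\C^r)$ to~$\mathbb{D}^j$, where $V_2(\C^r)$ is the complex Stiefel manifold of $2$-frames. This manifold is $2(r-2)$-connected, so the extension exists whenever $j-1\le 2r-4$, that is, $d_1\le 2r-3$; and indeed $r\ge k_0+2$ together with $k_0\ge\lceil(d_1-1)/2\rceil$ gives $d_1\le 2k_0+1=2(k_0+2)-3\le 2r-3$. The appearance of the Stiefel manifold $V_2(\C^r)$ in place of the sphere $\Sphere^{2r-1}$ that occurs in the ``real'' case is exactly the point overlooked in~\cite{De_Nittis-Gomi:Quaternionic_Bloch}, and it is what forces the extra shift by~$2$ in the rank, hence the terms $\lceil(d_0-3)/2\rceil$ and $\lceil(d_1-1)/2\rceil$ in~$k_0$.
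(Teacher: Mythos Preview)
Your proof is correct and follows essentially the same route as the paper's: reduce to splitting off a single~$\theta^2$, extend over~$X^\tau$ using the $\mathbb{H}$-bundle structure via Proposition~\ref{Husemoller}, then extend over the free cells using the $2$-frame condition. The only cosmetic difference is in the free-cell step: the paper observes directly that an injective $\Z/2$-equivariant map $\theta^2\to E'$ on $\Z/2\times\mathbb{D}^j$ is the same datum as an injective (non-equivariant) bundle map $\C^2\times\mathbb{D}^j\to E'|_{\{+1\}\times\mathbb{D}^j}$ and then invokes Proposition~\ref{Husemoller} for $\field=\C$ twice, whereas you arrive at the same Stiefel-manifold extension problem by tracking~$s_1$ on both sheets; both yield the identical bound $d_1\le 2r-3$.
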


\begin{proof}
  We are going to prove that any injective $\Z/2$-equivariant vector
  bundle map $f_A\colon \theta^2_A \to E|_A$ extends to an injective
  $\Z/2$-equivariant vector bundle map $f\colon \theta^2_X \to E$ if
  $k\ge k_0 +2$.  This implies the statement as in the proof of
  Theorem~\ref{the:main_real_relative_1}.

  We first construct~$f$ on the subset $X^\tau \cup A$.  This is
  equivalent to extending $f_A|_{A^\tau}$ from~$A^\tau$ to~$X^\tau$.
  In this part of the proof, we may assume without loss of generality
  that $X^\tau\neq\emptyset$.  This forces~$k$ to be even.  Since the
  involution acts trivially on~$X^\tau$, the ``quaternionic''
  bundle~$E$ of rank~$k$ becomes an $\mathbb{H}$\nb-vector bundle of
  rank~$k/2$.  Our assumptions imply that all cells in the relative
  CW-complex $(X^\tau,A^\tau)$ have dimension
  $j \le d_0 \le 2 k_0 + 3 \le 2 k - 1 = 4(k/2) - 1$.  Now the
  relative version of Proposition~\ref{Husemoller} allows us to
  extend~$f_A|_{A^\tau}$ to an injective $\Z/2$-equivariant vector
  bundle map $\theta^1_{X^\tau} \to E|_{X^\tau}$.

  Next, we extend our section further from $X^\tau \cup A$ to~$X$.  It
  suffices to extend an injective $\Z/2$-equivariant vector bundle map
  from the boundary of any cell in $(X,A \cup X^\tau)$ to the whole
  cell: if we can do this, we may build the required section by
  induction over the skeleta.  Since we work relative to~$X^\tau$,
  only free cells $\mathbb{D}^j\times \Z/2$ occur.  An injective
  \emph{$\Z/2$-equivariant} vector bundle map on
  $\mathbb{D}^j\times \Z/2$ is equivalent to an injective vector
  bundle map on one of the pieces~$\mathbb{D}^j$.  Here our problem
  becomes equivalent to extending a $\C$\nb-vector bundle isomorphism
  $E|_{\partial \mathbb{D}^j} \cong E_0 \oplus (\partial \mathbb{D}^j
  \times \C^2)$ for some $\C$\nb-vector bundle~$E_0$
  over~$\partial \mathbb{D}^j$ to a $\C$\nb-vector bundle isomorphism
  $E|_{\mathbb{D}^j} \cong \tilde{E}_0 \oplus (\mathbb{D}^j \times
  \C^2)$ for some $\C$\nb-vector bundle~$\tilde{E}_0$
  over~$\mathbb{D}^j$.  This amounts to applying
  Proposition~\ref{Husemoller} for $\field=\C$ twice and is possible if
  $j \le 2(k-1) -1 = 2 k -3$.  This is indeed the case because
  $j \le d_1 \le 2 k_0+1\le 2 k-3$.
\end{proof}

\begin{theorem}
  \label{the:main_quaternion_relative_2}
  Let \(d_1,d_0,k\in\N\).  Let \((X,A)\) be a relative
  \(\Z/2\)-CW-complex.  Assume that the free cells in~\((X,A)\) have
  at most dimension~\(d_1\) and that the trivial cells have at most
  dimension~\(d_0\).  Let
  \[
    k_1 \defeq
    \max \left\{
      \left\lceil \frac{d_0-2}{2} \right\rceil,
      \left\lceil \frac{d_1}{2} \right\rceil
    \right\}.
  \]
  Let \(E_1\) and~\(E_2\) be two ``quaternionic'' vector bundles
  over~\(X\) of rank \(k\ge k_1\).  An isomorphism
  $E_1|_A \cong E_2|_A$ that extends to a stable isomorphism between
  \(E_1\) and~\(E_2\) on~$X$ extends to an isomorphism
  \(E_1\cong E_2\).
\end{theorem}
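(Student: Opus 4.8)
The plan is to follow the proof of Theorem~\ref{the:main_real_relative_2} almost verbatim, replacing the role of Theorem~\ref{the:main_real_relative_1} by Theorem~\ref{the:main_quaternion_relative_1} and taking care of the parity constraint that forces trivial ``quaternionic'' summands to have even rank.  Let $\varphi_A\colon E_1|_A \cong E_2|_A$ be the given isomorphism.  As in the real case, every ``quaternionic'' vector bundle over a finite-dimensional $\Z/2$-CW-complex is a direct summand in a trivial one, and all trivial ``quaternionic'' bundles have even rank; so the stable isomorphism hypothesis upgrades to an isomorphism $\psi\colon E_1 \oplus \theta^{2\ell}_X \cong E_2 \oplus \theta^{2\ell}_X$ for some $\ell\ge0$ with $\psi|_A = \varphi_A \oplus \Id$.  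The induction now removes one rank-two summand~$\theta^2_X$ at a time: I would show that an isomorphism $E_1 \oplus \theta^{2\ell}_X \cong E_2 \oplus \theta^{2\ell}_X$ extending $\varphi_A\oplus\Id$ implies one $E_1 \oplus \theta^{2\ell-2}_X \cong E_2 \oplus \theta^{2\ell-2}_X$ again extending $\varphi_A\oplus\Id$, and iterate $\ell$~times.  Absorbing $\theta^{2\ell-2}_X$ into each~$E_j$ reduces to the case $\ell=1$, the rank of the resulting bundles still being at least $k\ge k_1$.

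For the case $\ell=1$, I would set up the same geometric argument on $Y = X\times I$ with the involution $\tau(x,t)\defeq(\tau(x),t)$.  Writing $B\defeq X\times\partial I \cup A\times I$, the relative $\Z/2$-CW-complex $(Y,B)$ has free cells of dimension at most $d_1+1$ and trivial cells of dimension at most $d_0+1$.  Let~$E$ be the pullback of $E_1\oplus\theta^2_X$, so that $\rank(E)=k+2$.  I would identify $E|_B \cong E_0^B \oplus \theta^2_B$ by gluing the tautological splitting of $E_1\oplus\theta^2_X$ on $X\times\{0\}\cup A\times I$ to the splitting transported by~$\psi$ on $X\times\{1\}$; these match on $A\times\{1\}$ precisely because $\psi|_A=\varphi_A\oplus\Id$.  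Here $E_0^B$ restricts to~$E_1$ on $X\times\{0\}\cup A\times I$ and to~$E_2$ on $X\times\{1\}$, glued along $A\times\{1\}$ via~$\varphi_A$.

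The crucial step is to extend this splitting from~$B$ to all of~$Y$ using Theorem~\ref{the:main_quaternion_relative_1}.  More precisely, its proof establishes the one-step statement that an injective $\Z/2$-equivariant map $\theta^2_B\hookrightarrow E|_B$ extends over~$Y$ whenever $\rank(E)\ge k_0(Y)+2$.  The dimension bookkeeping is exactly what makes this go through: with $d_0(Y)=d_0+1$ and $d_1(Y)=d_1+1$ one computes
\[
  k_0(Y) = \max\left\{\left\lceil\tfrac{d_0-2}{2}\right\rceil, \left\lceil\tfrac{d_1}{2}\right\rceil\right\} = k_1,
\]
so the hypothesis $\rank(E)=k+2\ge k_1+2 = k_0(Y)+2$ is equivalent to the assumption $k\ge k_1$.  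This yields a ``quaternionic'' bundle~$E_0$ over~$Y$ with $E\cong E_0\oplus\theta^2_Y$ extending the splitting on~$B$, so that~$E_0$ restricts to~$E_1$ and~$E_2$ on the two ends and is constant $\cong E_1|_A\cong E_2|_A$ (via~$\varphi_A$) along $A\times I$.  Finally, as in the proof in~\cite{Husemoller:Fibre_bundles_3}, a bundle over $X\times I$ determines an isomorphism between its restrictions to the two ends, giving the desired isomorphism $E_1\cong E_2$ that restricts to~$\varphi_A$ on~$A$.

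I expect the only genuine obstacle to be bookkeeping rather than geometry.  The delicate point of the ``quaternionic'' theory---that a nonvanishing equivariant section need not give an injective bundle map at free points, as flagged after Proposition~\ref{Q}---is already absorbed into Theorem~\ref{the:main_quaternion_relative_1}.  What remains is to handle the even-rank parity correctly, cancelling one~$\theta^2_X$ at a time and invoking the one-step form of that theorem, and to verify that the shift $d_0\mapsto d_0+1$, $d_1\mapsto d_1+1$ on passing to $Y=X\times I$ turns the threshold~$k_0$ of Theorem~\ref{the:main_quaternion_relative_1} into exactly the threshold~$k_1$ of the present statement.
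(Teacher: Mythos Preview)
Your proposal is correct and follows essentially the same approach as the paper, which simply says the result follows from Theorem~\ref{the:main_quaternion_relative_1} in exactly the same way as Theorem~\ref{the:main_real_relative_2} follows from Theorem~\ref{the:main_real_relative_1}, noting that on $X\times I$ the parameters become $d_1+1$, $d_0+1$, and $k+2$ (since the smallest trivial ``quaternionic'' bundle has rank~$2$). Your write-up is in fact more explicit than the paper's, correctly isolating the one-step form of Theorem~\ref{the:main_quaternion_relative_1} and verifying the identity $k_0(Y)=k_1$.
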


\begin{proof}
  This follows from Theorem~\ref{the:main_quaternion_relative_1} in
  exactly the same way as Theorem~\ref{the:main_real_relative_2}
  follows from Theorem~\ref{the:main_real_relative_1}.  First, we use
  that any ``quaternionic'' vector bundle over a finite-dimensional
  $\Z/2$-CW-complex is a direct summand in a trivial ``quaternionic''
  bundle.  The relevant quantities for $d_1,d_0,k$ on $X\times I$ are
  now $d_1+1$, $d_0+1$ and $k+2$ because the smallest trivial
  ``quaternionic'' bundle has rank~$2$.  So the estimate about the
  rank in Theorem~\ref{the:main_quaternion_relative_1} for the
  extension problem on $X\times I$ is equivalent to the assumption
  made in this theorem.
\end{proof}

\section{Conjugacy of projections}
\label{sec:conjugacy}

Our physical motivation was about projections in the observable
algebra being homotopic.  In this very short section, we briefly
comment on the link between this original problem and our results on
vector bundles.

Let $E$ be a ``real'' or ``quaternionic'' vector bundle
over~$(X,\tau)$.  Then~$E$ is a direct summand in a trivial bundle.
Equivalently, there is another ``real'' or ``quaternionic'' vector
bundle~$E^\bot$ over~$X$ so that $E\oplus E^\bot \cong \theta^k_X$ for
some $k\in\N$, with $k\in 2\N$ in the ``quaternionic'' case.  The
projection onto~$E$ is an endomorphism of the trivial
bundle~$\theta^k_X$.  In the ``real'' case, the endomorphism ring
of~$\theta^k_X$ is the ring of functions $X\to \Mat_k(\C)$ that
satisfy $\conj{f(x)} = f(\tau(x))$.  In the ``quaternionic'' case, let
$\Theta_0 = \bigl(
\begin{smallmatrix}
  0&-1\\1&0
\end{smallmatrix}\bigr) \in \Mat_2(\R)$ and let $\Theta_0^{(k)} \in
\Mat_{2 k}(\R)$ be the block diagonal sum of $k$~copies of~$\Theta_0$.
Then the endomorphism ring of~$\theta^k_X$ is the ring of functions
$X\to \Mat_k(\C)$ that satisfy
\(\Theta_0^{(k)} \conj{f(x)} \Theta_0^{(k)} = f(\tau(x))\).  Taking
the pointwise adjoint of a matrix-valued function makes this
endomorphism ring into a unital $\Cst$\nb-algebra~$A$.  For $X=\T^d$,
this is the observable $\Cst$\nb-algebra for a translation-invariant
physical system in dimension~$d$ with a time-reversal symmetry of
square $+1$ in the ``real'' case and of square $-1$ in the
``quaternionic'' case.  Homotopy classes of projections in~$A$ are in
bijection with homotopy classes of invertible, self-adjoint elements
of~$A$, which are the possible Hamiltonians for insulators when the
observable algebra is~$A$.  So the physical question is to classify
the projections in~$A$ up to homotopy.

Each projection in~$A$ generates a direct sum decomposition of the
trivial bundle~$\theta^k_X$ as $E \oplus E^\bot$, where $E$
and~$E^\bot$ are the image bundles of $p$ and $1-p$, respectively.
The following result is well known.

\begin{lemma}
  Let $p$ and~$q$ be two such projections and let
  $\theta^k_X =E \oplus E^\bot$ and $\theta^k_X = F \oplus F^\bot$ be
  the resulting direct sum decompositions.  There is an invertible
  element $v\in A$ with $v p v^{-1} = q$ if and only if $E \cong F$
  and $E^\bot \cong F^\bot$ as ``real'' or ``quaternionic'' vector
  bundles.
\end{lemma}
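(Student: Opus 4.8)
The plan is to translate the algebraic statement about~$A$ into a geometric statement about structure-preserving bundle maps, where it becomes nearly tautological. Recall that, in both the ``real'' and the ``quaternionic'' case, $A$ is the endomorphism ring of the trivial bundle $\theta^k_X$ consisting of the fibrewise $\C$\nb-linear endomorphisms that commute with the structure map~$\Theta$. Hence an invertible element $v\in A$ is precisely a $\Z/2$-equivariant, fibrewise invertible automorphism of~$\theta^k_X$, and a projection in~$A$ is a $\Z/2$-equivariant idempotent. With this dictionary, $p$ and~$1-p$ have images the ``real'' or ``quaternionic'' subbundles $E$ and~$E^\bot$, the inclusion-induced map $E\oplus E^\bot\to\theta^k_X$ is an isomorphism, and similarly for $q$, $F$, $F^\bot$.

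For the forward implication I would assume $vpv^{-1}=q$ and show that $v$ carries the first decomposition to the second. If $p\xi=\xi$, then $q(v\xi)=vpv^{-1}(v\xi)=v\xi$, so $v(E)\subseteq F$; applying the same computation to the relation $v^{-1}qv=p$ yields the reverse inclusion and hence $v(E)=F$, and the parallel argument with $1-p$, $1-q$ gives $v(E^\bot)=F^\bot$. Because $v$ commutes with~$\Theta$, the restrictions $v|_E\colon E\to F$ and $v|_{E^\bot}\colon E^\bot\to F^\bot$ are isomorphisms of ``real'' or ``quaternionic'' vector bundles, which is the desired conclusion.

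For the converse I would assemble given isomorphisms $\alpha\colon E\to F$ and $\beta\colon E^\bot\to F^\bot$ into a single automorphism of~$\theta^k_X$. Using the fibrewise decomposition $\xi=p\xi+(1-p)\xi$, I set $v(\xi)\defeq\alpha(p\xi)+\beta((1-p)\xi)$, where the two summands are viewed inside~$\theta^k_X$ via the inclusions $F,F^\bot\subseteq\theta^k_X$. This $v$ is continuous and fibrewise invertible, with inverse $\eta\mapsto\alpha^{-1}(q\eta)+\beta^{-1}((1-q)\eta)$, and it satisfies $v(E)=F$ and $v(E^\bot)=F^\bot$ by construction. Consequently $vpv^{-1}$ is a projection with image~$F$ and kernel~$F^\bot$; since an idempotent is determined by its image and kernel, $vpv^{-1}=q$.

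The step I expect to be the real content is checking that this assembled~$v$ actually lies in~$A$, that is, that it commutes with~$\Theta$. This holds precisely because $E$ and~$E^\bot$ are $\Theta$\nb-invariant and $\alpha,\beta$ intertwine the structure maps, so it is exactly here that the hypothesis must be that $E\cong F$ and $E^\bot\cong F^\bot$ as ``real'' or ``quaternionic'' bundles, not merely as complex bundles. Everything else---that restrictions of an equivariant automorphism remain equivariant, and that conjugating a projection produces the projection onto the image of the moved summand---is routine once the dictionary is in place.
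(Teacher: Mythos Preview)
Your proposal is correct and follows exactly the same approach as the paper: assemble the two bundle isomorphisms into an automorphism of~$\theta^k_X$ for one direction, and restrict the given automorphism to the summands for the other. The paper's proof is simply a terse two-sentence version of what you have spelled out in detail.
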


\begin{proof}
  If $E \cong F$ and $E^\bot \cong F^\bot$, then the two isomorphisms
  together produce an automorphism
  $\theta^k_X =E \oplus E^\bot \cong F \oplus F^\bot = \theta^k_X$.
  This is simply an invertible element $v\in A$, and it satisfies
  $v p v^{-1} = q$.  Conversely, such an invertible element defines an
  automorphism of~$\theta^k_X$ that restricts to isomorphisms
  $E \cong F$ and $E^\bot \cong F^\bot$.
\end{proof}

We call $p$ and~$q$ \emph{conjugate} if there is an invertible element
$v\in A$ with $v p v^{-1} = q$.  This is well known to be equivalent
to the existence of a unitary $v\in A$ with $v p v^{-1} = q$.  It is
also well known that homotopic projections are conjugate.  The
converse is only known up to stabilisation, however.  The issue is
whether the invertible element implementing the conjugacy is homotopic
to the unit in~$A$.

The projections $p$ and~$q$ are stably conjugate, meaning that there
is a projection $r$ in another matrix algebra so that $p\oplus r$ and
$q\oplus r$ are conjugate, if and only if $E$ and~$F$ are stably
isomorphic and $E^\bot$ and~$F^\bot$ are stably isomorphic.  So
Proposition~\ref{pro:torus_dim_four} says that if $X=\T^d$ with
$d\le 4$, then stable conjugacy and conjugacy are equivalent for
projections in~$A$.  It is impossible, however, to prove a result that
relates stable homotopy and homotopy by working only with vector
bundles.

\begin{bibdiv}
 \begin{biblist}
   \bibselect{references}
 \end{biblist}
\end{bibdiv}
\end{document}